\renewcommand\@biblabel[1]{(#1)}
\theoremstyle{plain}
\newtheorem{thm}[subsection]{Theorem}
\newtheorem{lem}[subsection]{Lemma}
\newtheorem{cor}[subsection]{Corollary}
\theoremstyle{definition}
\numberwithin{equation}{section} \setcounter{tocdepth}{1}
\begin{document}

\title [\texttt{A Novel/Old Modification of the First Zagreb Index}]{A Novel/Old Modification of the First Zagreb Index}
\author{Akbar Ali$^{1}$ and Nenad Trinajsti\'{c}$^{2}$}
 \address{$^{1}$Department of Mathematics, University of Management and Technology, Sialkot, Pakistan.}
 \email{akbarali.maths@gmail.com}
 \address{$^{2}$The Rugjer Bo\v{s}kovi\'{c} Institute, P. O. Box 180, HR-10002 Zagreb, Croatia.}
\email{trina@irb.hr}

\keywords{topological index, Zagreb indices, chemical tree, vertex connection number}

\begin{abstract}

In the paper [Gutman, I.; Trinajsti\'{c}, N. \textit{Chem. Phys. Lett.} \textbf{1972}, 17, 535], it was shown that total $\pi$-electron energy ($E$) of a molecule $M$ depends on the quantity $\sum_{v\in V(G)}d_{v}^{2}$ (nowadays known as the \textit{first Zagreb index}), where $G$ is the graph corresponding to $M$, $V(G)$ is the vertex set of $G$ and $d_{v}$ is degree of the vertex $v$. In the same paper, the graph invariant $\sum_{v\in V(G)}d_{v}\tau_{v}$ (where $\tau_{v}$ is the connection number of $v$, that is the number of vertices at distance 2 from $v$) was also proved to influence $E$, but this invariant was never restudied explicitly. We call it \textit{modified first Zagreb connection index} and denote it by $ZC_{1}^{*}$. In this paper, we characterize the extremal elements with respect to the graph invariant $ZC_{1}^{*}$ among the collection of all $n$-vertex chemical trees.

\end{abstract}

\maketitle

\section{Introduction}

It is well known fact that chemical compounds can be represented by graphs (known as molecular graphs) in which vertices correspond to the atoms while edges represent the covalent bonds between atoms \cite{trn,gutman1}. In theoretical chemistry, the physicochemical properties of chemical compounds are often modeled by the topological indices \cite{t1,f1}. Topological indices are numerical quantities of molecular graph, which are invariant under graph isomorphism \cite{f1}.  In the paper \cite{g3}, it was shown that the following topological indices appears in an approximate formula for total $\pi$-electron energy ($E$) of a molecule $M$:
$$M_{1}(G)=\sum_{v\in V(G)}d_{v}^{2} \ , \ \ \ ZC_{1}^{*}(G)=\sum_{v\in V(G)}d_{v}\tau_{v}$$
where $G$ is the graph corresponding to the molecule $M$, $V(G)$ is the vertex set of $G$, $d_{v}$ is degree of the vertex $v$ and $\tau_{v}$ is the connection number of $v$ (that is, the number of vertices at distance 2 from $v$). The topological index $M_{1}$ is known as the \textit{first Zagreb index}. We call the topological index $ZC_{1}^{*}$ as \textit{modified first Zagreb connection index}. The following topological index is called \textit{second Zagreb index}: \cite{g3,gutman75}
$$M_{2}(G)=\sum_{uv\in E(G)}d_{u}d_{v},$$
where $E(G)$ is the edge set of the graph $G$ and $uv$ is the edge between the vertices $u,v$.
The first Zagreb index and second Zagreb index are among the oldest and most studied topological indices. More than a hundred papers have been devoted to these Zagreb indices, for example see the reviews \cite{Nic03, Das04, Gut04} published on the occasion of their 30th anniversary, recent surveys \cite{g1,Gut14}, very recent papers \cite{Habi16,Reti16,Bor16,wang16,lee16,das16,deng16} and related references cited therein. In contrast, the modified first Zagreb connection index did not attract attention in any of the numerous publications on topological indices till 2016. The main purpose of the present study is to gather some basic properties of the modified first Zagreb connection index, and-especially-to characterize the extremal elements with respect to the aforementioned topological index among the collection of all $n$-vertex chemical trees.

\section{Some Observations on the Modified First Zagreb Connection Index}

In this section, some basic properties of the modified first Zagreb connection index are established. As usual, the path graph, star graph and complete graph on $n$ vertices will be denoted by $P_{n}$, $S_{n}$ and $K_{n}$, respectively. Undefined notations and terminologies from (chemical) graph theory can be found in the books \cite{Ha69,trn,bon76}. Suppose that the graph $G$ has $n_{0}$ vertices with degree zero and let $V_{0}(G)$ be the set of these vertices. In the Reference \cite{Doslic11}, the following identity was derived:
\begin{equation}\label{Eq113}
\sum_{u\in V(G)\setminus V_{0}(G)}d_{u}f(d_{u})=\sum_{uv\in E(G)}(f(d_{u})+f(d_{v})),
\end{equation}
where $f$ is a function defined on the set of vertex degrees of the graph $G$. The proof technique used in establishing the identity (\ref{Eq113}) also works for deriving the following more general identity:
\begin{equation}\label{Eq114}
\sum_{u\in V(G)\setminus V_{0}(G)}d_{u}g(u)=\sum_{uv\in E(G)}(g(u)+g(v)),
\end{equation}
where $g$ is a function defined on the vertex set $V(G)$. Hence, the modified first Zagreb connection index can be rewritten in the following form by setting $g(u)=\tau_{u}$ in the identity (\ref{Eq114}):
\begin{equation}\label{Eq115}
ZC_{1}^{*}(G)=\sum_{uv\in E(G)}(\tau_{u}+\tau_{v}).
\end{equation}

In 2008, Yamaguchi \cite{Yamaguchi} derived a lower bound (given in the following theorem) for the topological index $M_{2}$ in which the quantity $\sum_{v\in V(G)}d_{v}\tau_{v}$ also appears:

\begin{thm}\label{t555}
If $G$ is a connected graph, then
\begin{equation}\label{Eq111}
M_{2}(G)\geq\frac{1}{2}\left(\sum_{v\in V(G)}d_{v}(d_{v}+\tau_{v})\right),
\end{equation}
with equality if and only if $G$ is a triangle- and quadrangle-free
graph.
\end{thm}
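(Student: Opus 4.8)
The plan is to establish the inequality vertex-by-vertex and then sum, the key device being a double count of the length-$2$ walks emanating from a fixed vertex. First I would fix a vertex $v$ and count the walks $v\to u\to w$ with $u$ adjacent to $v$ and $w$ adjacent to $u$. Summing over the first step produces exactly $\sum_{u\sim v}d_u$ such walks. On the other hand, the endpoint $w$ must lie at distance $0$, $1$, or $2$ from $v$, and sorting the walks by this distance yields the identity
$$\sum_{u\sim v}d_u = d_v + 2t_v + \sum_{w:\,d(v,w)=2}\lambda(v,w),$$
where $t_v$ is the number of triangles containing $v$ and $\lambda(v,w)$ is the number of common neighbours of $v$ and $w$. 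The $d_v$ walks returning to $v$ account for $w=v$; each triangle on $v$ contributes two walks ending at a neighbour of $v$, giving $2t_v$; and each vertex $w$ at distance $2$ is reached by exactly $\lambda(v,w)$ walks.

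Next, since $t_v\ge 0$ and each distance-$2$ vertex shares at least one neighbour with $v$ (so $\lambda(v,w)\ge 1$ and hence $\sum_{w:\,d(v,w)=2}\lambda(v,w)\ge \tau_v$), I obtain the per-vertex estimate
$$\sum_{u\sim v}d_u \ \ge\ d_v+\tau_v.$$
Multiplying by $d_v$ and summing over all $v\in V(G)$, the left-hand side collapses to $\sum_{v}d_v\sum_{u\sim v}d_u = 2\sum_{uv\in E(G)}d_ud_v = 2M_2(G)$, since each edge is counted from both of its endpoints. This immediately gives the desired bound $M_2(G)\ge \tfrac12\sum_{v}d_v(d_v+\tau_v)$.

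For the equality characterization I would trace back the two slackness sources. Equality in the summed inequality forces, for every vertex $v$ with $d_v>0$, both $t_v=0$ (no triangle through $v$) and $\lambda(v,w)=1$ for every $w$ at distance $2$; the latter says $v$ and $w$ never have two common neighbours, i.e.\ there is no $4$-cycle through $v$. Conversely, these two conditions turn each per-vertex inequality into an equality. Hence equality holds precisely when $G$ is triangle- and quadrangle-free. I expect the only delicate point to be the careful bookkeeping in the walk-counting identity---correctly assigning the factor $2$ to triangles and identifying the excess $\sum_{w}(\lambda(v,w)-1)$ with the presence of quadrangles---rather than any genuine analytic difficulty.
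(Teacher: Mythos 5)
Your proof is correct. Note that the paper itself offers no argument for this statement: Theorem \ref{t555} is quoted verbatim from Yamaguchi's paper \cite{Yamaguchi}, so there is no internal proof to compare against; you have supplied a complete, self-contained derivation of a cited result. Your walk-counting identity $\sum_{u\sim v}d_u=d_v+2t_v+\sum_{w:\,d(v,w)=2}\lambda(v,w)$ is exactly right, the collapse of $\sum_v d_v\sum_{u\sim v}d_u$ to $2M_2(G)$ is standard, and the equality analysis is sound because connectivity guarantees $d_v\geq 1$ for $n\geq 2$, so every per-vertex slack term $d_v\bigl(2t_v+\sum_w(\lambda(v,w)-1)\bigr)$ must vanish individually. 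The one point worth making explicit in the converse direction of the equality characterization is that a quadrangle $abcd$ with opposite vertices $a,c$ \emph{adjacent} does not directly violate $\lambda(a,c)=1$ (since then $d(a,c)\neq 2$), but it forces a triangle, so the two conditions $t_v=0$ for all $v$ and $\lambda(v,w)=1$ for all pairs at distance $2$ are jointly equivalent to triangle- and quadrangle-freeness; you gesture at this bookkeeping and it does go through.
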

Hence, if the graph under consideration is triangle- and quadrangle-free then the modified first Zagreb connection index can be written as the linear combination of the topological indices $M_{1}$ and $M_{2}$.
\begin{cor}\label{c555}
If $G$ is a triangle- and quadrangle-free
graph, then
\begin{equation}\label{Eq1110}
ZC_{1}^{*}(G)=2M_{2}(G)-M_{1}(G).
\end{equation}
\end{cor}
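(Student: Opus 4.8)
The plan is to obtain \eqref{Eq1110} as an immediate consequence of the equality case of Theorem~\ref{t555}. Since $G$ is triangle- and quadrangle-free, the equality condition in \eqref{Eq111} is satisfied, so the inequality collapses to the identity
\begin{equation*}
M_{2}(G)=\frac{1}{2}\sum_{v\in V(G)}d_{v}(d_{v}+\tau_{v}).
\end{equation*}
The remaining steps are purely formal: distribute the product inside the sum and split it,
\begin{equation*}
\sum_{v\in V(G)}d_{v}(d_{v}+\tau_{v})=\sum_{v\in V(G)}d_{v}^{2}+\sum_{v\in V(G)}d_{v}\tau_{v},
\end{equation*}
then recognise the two resulting summands as $M_{1}(G)$ and $ZC_{1}^{*}(G)$ by their definitions. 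Substituting back gives $2M_{2}(G)=M_{1}(G)+ZC_{1}^{*}(G)$, and rearranging produces the claimed identity.

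The algebra offers no real resistance; the single point that deserves a sentence of care is that Theorem~\ref{t555} is stated for connected graphs, whereas the corollary only assumes that $G$ is triangle- and quadrangle-free. To sidestep any connectivity hypothesis, I would instead verify the underlying local identity directly. In a triangle- and quadrangle-free graph, a vertex $w$ lies at distance $2$ from $v$ exactly when it is the endpoint of a length-$2$ walk $v\to u\to w$ with $w\neq v$: triangle-freeness guarantees such a $w$ is non-adjacent to $v$, and quadrangle-freeness guarantees the intermediate neighbour $u$ is unique. Counting these walks, and subtracting the $d_{v}$ backtracking walks $v\to u\to v$, yields
\begin{equation*}
\tau_{v}=\Bigl(\sum_{u\sim v}d_{u}\Bigr)-d_{v}\qquad\text{for every }v\in V(G).
\end{equation*}

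Multiplying this relation by $d_{v}$ and summing over all $v$, I would then use the handshaking-type identity $\sum_{v\in V(G)}d_{v}\sum_{u\sim v}d_{u}=2M_{2}(G)$ --- each edge $uv$ being counted once from each endpoint --- together with $\sum_{v\in V(G)}d_{v}^{2}=M_{1}(G)$, to recover $ZC_{1}^{*}(G)=2M_{2}(G)-M_{1}(G)$ with no assumption beyond the two forbidden-cycle conditions. The only genuine obstacle is thus bookkeeping in the walk count, namely ensuring that the backtracking walks are correctly excluded; this is precisely the role of the $-d_{v}$ correction term.
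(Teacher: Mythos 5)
Your first argument is exactly the paper's (implicit) proof: the authors state the corollary as a direct consequence of the equality case of Theorem~\ref{t555}, and your expansion of $\sum_{v}d_{v}(d_{v}+\tau_{v})$ into $M_{1}(G)+ZC_{1}^{*}(G)$ is the same two lines of algebra they leave to the reader. Where you genuinely depart from the paper is in flagging, and then repairing, the connectivity hypothesis: Theorem~\ref{t555} is stated only for connected graphs, and the paper silently applies it without that caveat. Your direct derivation of the local identity $\tau_{v}=\bigl(\sum_{u\sim v}d_{u}\bigr)-d_{v}$ from triangle- and quadrangle-freeness (non-backtracking length-$2$ walks land outside $N(v)\cup\{v\}$ by triangle-freeness, and hit each distance-$2$ vertex exactly once by quadrangle-freeness), followed by multiplying by $d_{v}$ and summing with $\sum_{v}d_{v}\sum_{u\sim v}d_{u}=2M_{2}(G)$, is correct and is in effect a self-contained proof of the equality case of Yamaguchi's bound. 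What this buys you is a statement valid for arbitrary (possibly disconnected) triangle- and quadrangle-free graphs with no appeal to the cited theorem; what the paper's route buys is brevity, at the cost of an unstated reduction to connected components. Both arguments are sound.
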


Here, it needs to be mentioned that Xu \textit{et al.} \cite{Xu-14} considered the linear combination $2M_{2}(G)-M_{1}(G)$ for any connected graph $G$. As there are many (lower and upper) bounds as well as relations for the topological indices $M_{1}$ and $M_{2}$ in the literature. Hence, several bounds for the topological index $ZC_{1}^{*}$ can be easily established using Eq. (\ref{Eq1110}) in case of triangle- and quadrangle-free graphs. For instance, we derive an upper bound for the topological index $ZC_{1}^{*}$ using the following result which was reported in the references \cite{Das04,Das09}:

\begin{thm}\label{t666}
If $G$ is an $n$-vertex graph with size $m$ and minimum vertex degree $\delta$ then
\[M_{2}(G)\leq2m^{2}-(n-1)m\delta+\frac{1}{2}(\delta-1)M_{1}(G)\]
with equality if and only if $G\cong S_{n}$ or $G\cong K_{n}$.
\end{thm}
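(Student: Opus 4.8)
The plan is to reduce the asserted inequality to a statement about the complement graph $\bar G$, where it becomes a sum of manifestly nonnegative terms. First I would rewrite the right-hand side. Since $\sum_{v\in V(G)}d_v=2m$, squaring gives $4m^{2}=M_{1}(G)+2\sum_{\{u,v\}}d_u d_v$, where the last sum runs over all unordered pairs of distinct vertices; hence $2m^{2}=\tfrac12 M_{1}(G)+\sum_{\{u,v\}}d_u d_v$. Splitting the pair-sum into edges and non-edges, $\sum_{\{u,v\}}d_u d_v=M_{2}(G)+\overline{M}_2(G)$ with $\overline{M}_2(G):=\sum_{uv\notin E(G)}d_u d_v$, and using $(n-1)m\delta=\tfrac{\delta}{2}\sum_v(n-1)d_v$, a direct substitution shows the claim is equivalent to
\[
\frac{\delta}{2}\sum_{v\in V(G)}d_v\,(n-1-d_v)\le \overline{M}_2(G).
\]

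Next I would interpret both sides over $\bar G$. Writing $\bar d_v=n-1-d_v$ for the degree of $v$ in $\bar G$, the left sum satisfies $\sum_v d_v\bar d_v=\sum_{uv\in E(\bar G)}(d_u+d_v)$, since each complement-edge $uv$ contributes $d_u$ and $d_v$; and by definition $\overline{M}_2(G)=\sum_{uv\in E(\bar G)}d_u d_v$. Thus the reduced inequality is
\[
\sum_{uv\in E(\bar G)}\Big(d_u d_v-\tfrac{\delta}{2}(d_u+d_v)\Big)\ge 0 .
\]
The key observation is the factorization $d_u d_v-\tfrac{\delta}{2}(d_u+d_v)=\tfrac12 d_u(d_v-\delta)+\tfrac12 d_v(d_u-\delta)$, which is nonnegative term by term because every degree is at least $\delta\ge 0$. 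This proves the inequality.

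For the equality discussion, I would track when every summand vanishes: for each non-edge $uv$ one needs $d_u(d_v-\delta)=0$ and $d_v(d_u-\delta)=0$. Assuming $\delta\ge 1$ (so no degree is zero), this forces $d_u=d_v=\delta$; that is, both endpoints of every non-edge have degree exactly $\delta$, equivalently every vertex of degree exceeding $\delta$ is adjacent to all others. Both $K_n$ (no non-edges) and $S_n$ (non-edges only between degree-$1$ leaves, with a center of degree $n-1$) satisfy this, and I would confirm the two boundary cases by direct substitution.

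I expect the equality characterization to be the main obstacle: the local condition ``both endpoints of each non-edge have degree $\delta$'' must be promoted to a global structural statement. Care is needed here, since the term-by-term vanishing is also met by other graphs—for instance any regular graph, where $d_v-\delta=0$ for all $v$—so recovering \emph{exactly} $G\cong S_n$ or $G\cong K_n$ seems to demand a finer structural argument, or an additional hypothesis, beyond the vanishing of the individual terms. Isolating the precise structural consequences of the degree condition, and reconciling them with the stated extremal family, is where the real work lies.
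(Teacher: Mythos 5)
The paper contains no proof of this theorem to compare yours against: it is imported verbatim from the cited references of Das \emph{et al.}, and is only used as a black box to derive Corollary \ref{cor666}. Judged on its own, your argument for the inequality is correct and complete. The identity $4m^{2}=M_{1}(G)+2\sum_{\{u,v\}}d_{u}d_{v}$, the split of the pair sum into $M_{2}(G)+\overline{M}_{2}(G)$, and the rewriting of $(n-1)m\delta$ reduce the claim exactly to $\sum_{uv\in E(\bar G)}\bigl(\tfrac12 d_{u}(d_{v}-\delta)+\tfrac12 d_{v}(d_{u}-\delta)\bigr)\geq0$, which holds term by term; the step $\sum_{v}d_{v}\bar d_{v}=\sum_{uv\in E(\bar G)}(d_{u}+d_{v})$ is just the identity (\ref{Eq114}) applied in $\bar G$ with $g(u)=d_{u}$, and vertices isolated in $\bar G$ contribute nothing, so the omission of $V_{0}$ there is harmless.

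Your unease about the equality case is not a gap in your proof but a defect in the statement as quoted: the claimed characterization is false. Every regular graph makes all of your summands vanish, and indeed for $C_{4}$ one checks directly that $M_{2}=16=2\cdot16-3\cdot4\cdot2+\tfrac12\cdot16$; the complete split graph $K_{2}\vee\overline{K_{3}}$ is a further extremal example that is neither regular, nor a star, nor complete. The correct equality condition, which falls straight out of your term-by-term analysis, is (for $\delta\geq1$) that both endpoints of every non-edge have degree exactly $\delta$ --- equivalently, every vertex of degree exceeding $\delta$ is adjacent to all other vertices. This family contains $S_{n}$ and $K_{n}$ but is strictly larger, so no amount of additional structural work will recover the stated ``if and only if''; either the transcription from the original reference is inaccurate or the original carries hypotheses suppressed here. (The problem even propagates to Corollary \ref{cor666}: $C_{5}$ is triangle- and quadrangle-free and attains equality without being a star.) In short: accept your proof of the inequality, and replace the equality clause by the condition your argument actually yields.
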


The upper bound for $ZC_{1}^{*}$, given in the following corollary, is a direct consequence of Theorem \ref{t666}:

\begin{cor}\label{cor666}
If $G$ is a triangle- and quadrangle-free graph with order $n$, size $m$ and minimum vertex degree $\delta$ then
\[ZC_{1}^{*}(G)\leq4m^{2}-2(n-1)m\delta+(\delta-2)M_{1}(G)\]
with equality if and only if $G\cong S_{n}$.
\end{cor}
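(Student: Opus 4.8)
The plan is to deduce the corollary directly by chaining the two immediately preceding results, namely the exact expression of Corollary \ref{c555} with the upper bound of Theorem \ref{t666}. Since $G$ is assumed triangle- and quadrangle-free, Corollary \ref{c555} applies verbatim and yields the identity $ZC_{1}^{*}(G)=2M_{2}(G)-M_{1}(G)$. This reduces the problem of bounding $ZC_{1}^{*}$ to bounding $M_{2}$, for which Theorem \ref{t666} is available under no extra hypothesis beyond $G$ being an $n$-vertex graph.

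The key computational step is then to substitute the bound of Theorem \ref{t666}, scaled by a factor of $2$, into this identity. Concretely, I would write
\[
ZC_{1}^{*}(G)=2M_{2}(G)-M_{1}(G)\leq 2\left(2m^{2}-(n-1)m\delta+\tfrac{1}{2}(\delta-1)M_{1}(G)\right)-M_{1}(G),
\]
and collect the coefficient of $M_{1}(G)$, which simplifies from $(\delta-1)-1=\delta-2$, giving exactly the claimed right-hand side $4m^{2}-2(n-1)m\delta+(\delta-2)M_{1}(G)$. This arithmetic is routine and presents no difficulty.

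The only point requiring genuine care is the equality characterization, which is where I expect the (mild) subtlety to lie. Because Corollary \ref{c555} is an identity rather than an inequality, equality in the corollary's bound holds if and only if equality holds in Theorem \ref{t666}; the latter occurs precisely when $G\cong S_{n}$ or $G\cong K_{n}$. I would then intersect this with the standing hypothesis that $G$ is triangle- and quadrangle-free. For $n\geq 3$ the complete graph $K_{n}$ contains a triangle and is therefore excluded, while for $n\leq 2$ one has $K_{n}\cong S_{n}$, so in all cases the complete graph contributes nothing new. The star $S_{n}$, being a tree, is acyclic and hence automatically triangle- and quadrangle-free, so it survives the intersection. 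Thus equality holds if and only if $G\cong S_{n}$, completing the argument.
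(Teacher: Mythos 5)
Your proposal is correct and is exactly the argument the paper intends: the authors state the corollary as a ``direct consequence'' of Theorem \ref{t666}, meaning precisely the substitution of that bound into the identity of Corollary \ref{c555}, with the same arithmetic and the same elimination of $K_{n}$ from the equality case via the triangle-free hypothesis. No difference in approach and no gap.
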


\begin{figure}[p]
  \centering
   \includegraphics[width=6.2in, height=3in]{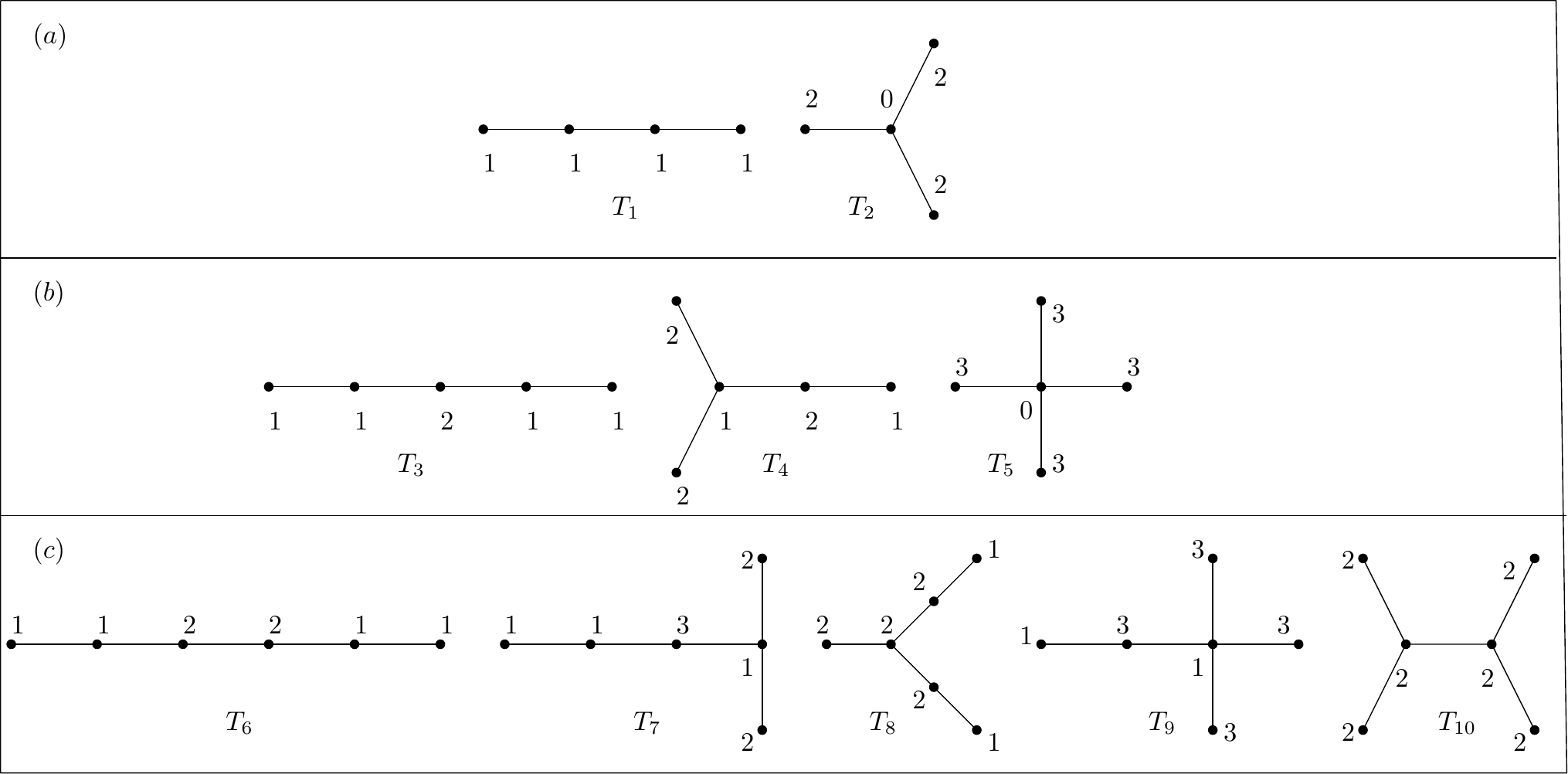}
    \caption{All the non-isomorphic chemical trees (together with vertex connection numbers) on (a) four vertices (b) five vertices (c) six vertices.}
    \label{f1}
     \end{figure}

\section{Modified First Zagreb Connection Index of chemical trees}

Denote by $\mathbb{CT}_{n}$ the collection of all $n$-vertex chemical trees. This section is devoted to characterize the extremal elements with respect to the topological index $ZC_{1}^{*}$ among the collection $\mathbb{CT}_{n}$. Note that the collection $\mathbb{CT}_{n}$ consist of only a single element for $n=1,2,3$. All the non-isomorphic members of of the collections $\mathbb{CT}_{4}$, $\mathbb{CT}_{5}$ and $\mathbb{CT}_{6}$ (together with vertex connection numbers) are depicted in Figure \ref{f1} and their $ZC_{1}^{*}$ values are given in Table \ref{table111}. Note that both non-isomorphic trees on 4 vertices have the same $ZC_{1}^{*}$ value. Hence, the before said problem concerning extremal chemical trees make sense only for $n\geq5$. Firstly, we characterize the chemical tree having minimum $ZC_{1}^{*}$ value among the aforementioned collection for $n\geq5$. For a vertex $u\in V(G)$, denote by $N(u)$ (the neighborhood of $u$) the set of all vertices adjacent with $u$. A vertex having degree 1 is called pendent vertex. A pendent vertex adjacent with a vertex having degree greater than 2 is called star-type pendent vertex.

\begin{table}[p]
\begin{tabular}{|c|c|c|c|c|c|c|c|c|c|c|c|c|c|c|c|c|c|c|c|c|}\hline%
Chemical Tree $T_{i}$ shown in Figure \ref{f1}  &$T_{1}$&$T_{2}$&$T_{3}$&$T_{4}$&$T_{5}$&$T_{6}$&$T_{7}$&$T_{8}$&$T_{9}$&$T_{10}$\\\hline%
The $ZC_{1}^{*}$ Value of $T_{i}$&6&6&10&12&12&14&16&18&20&20\\\hline
\end{tabular}
\caption{The $ZC_{1}^{*}$ values of the chemical trees depicted in Figure \ref{f1}.}
\label{table111}
\end{table}

\begin{thm}\label{t00}
For $n\geq5$, the path graph has minimum $ZC_{1}^{*}$ value among all the members of $\mathbb{CT}_{n}$.
\end{thm}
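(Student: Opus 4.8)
The plan is to exploit the fact that every tree is triangle- and quadrangle-free, so Corollary \ref{c555} gives $ZC_{1}^{*}(T)=2M_{2}(T)-M_{1}(T)$ for every $T\in\mathbb{CT}_{n}$; equivalently, one may use that in a tree $\tau_{v}=\sum_{u\in N(v)}(d_{u}-1)$. A direct computation with the degree sequence of the path then yields $ZC_{1}^{*}(P_{n})=4n-10$ for $n\geq4$ (consistent with Table \ref{table111}), so it suffices to show that every chemical tree other than $P_{n}$ has strictly larger value.

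First I would record a structural observation: if $T\in\mathbb{CT}_{n}$ is not a path, it has a vertex of degree at least $3$; rooting $T$ at any such branching vertex and choosing a branching vertex $w$ of maximum depth shows that $w$ has at least two \emph{pendant paths} among the components of $T-w$. This lets me define a degree-reducing transformation: letting $Q_{1}=x_{1}\cdots x_{s}$ and $Q_{2}=y_{1}\cdots y_{t}$ (with $s\geq t\geq1$) be two such pendant paths attached at $w$, delete the edge $wy_{1}$ and insert the edge $x_{s}y_{1}$, grafting $Q_{2}$ onto the end of $Q_{1}$. The result $T'$ is again a tree on $n$ vertices, and since the only degree changes are $d_{w}\to d_{w}-1$ and $d_{x_{s}}\to 2$, the maximum degree does not increase, so $T'\in\mathbb{CT}_{n}$.

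Next I would compute $\Delta=ZC_{1}^{*}(T')-ZC_{1}^{*}(T)$ through $2\Delta M_{2}-\Delta M_{1}$, localizing to the edges incident with $w$, $x_{s}$ and $y_{1}$. Writing $d=d_{w}\geq3$, $b=d_{y_{1}}$ and letting $c_{1},\dots,c_{d-2}$ be the degrees of the remaining neighbours of $w$, the bookkeeping gives $\Delta\leq-2d+4<0$ when $s\geq2$, and $\Delta=-2\sum_{j}c_{j}+2d-4$ when $s=t=1$. In the latter case $\sum_{j}c_{j}\geq d-2$, with equality precisely when every neighbour of $w$ is a leaf, that is, when $T$ is a star; otherwise $\Delta<0$ as well. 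The only obstruction to a strict decrease is therefore a star, and for $n\geq6$ no star lies in $\mathbb{CT}_{n}$ (its centre would have degree $n-1\geq5$). Hence for $n\geq6$ the transformation strictly decreases $ZC_{1}^{*}$.

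Finally I would iterate: starting from any $T\in\mathbb{CT}_{n}\setminus\{P_{n}\}$ with $n\geq6$, each application stays inside the finite set $\mathbb{CT}_{n}$ while strictly decreasing $ZC_{1}^{*}$, so the process terminates, and it can only stop at a tree with no branching vertex, namely $P_{n}$. This forces $ZC_{1}^{*}(P_{n})<ZC_{1}^{*}(T)$, establishing $P_{n}$ as the unique minimizer for $n\geq6$, while the case $n=5$ is settled directly by the values in Table \ref{table111}. I expect the main obstacle to be the boundary case $s=t=1$: controlling the equality and confirming that it forces a star (handled by the chemical constraint for $n\geq6$ and by inspection for $n=5$) is the delicate point, whereas the structural lemma and the arithmetic of $\Delta$ are routine.
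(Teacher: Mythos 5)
Your proposal is correct, but it proves the theorem by a genuinely different mechanism than the paper. The paper's proof is an induction on $n$: it deletes a pendant vertex $u$ from $T_{n}$, tracks how each term $d_{v}\tau_{v}$ changes, obtains $ZC_{1}^{*}(T_{n})=ZC_{1}^{*}(T_{n-1})+\sum_{i}d_{u_{i}}+d_{v}+\tau_{v}-1\geq ZC_{1}^{*}(T_{n-1})+4$, and closes with the inductive hypothesis and $ZC_{1}^{*}(P_{n})=4n-10$. You instead stay at fixed $n$ and run a descent argument: locate a deepest branching vertex $w$, graft one of its pendant paths onto the end of another, verify via $ZC_{1}^{*}=2M_{2}-M_{1}$ that the operation strictly decreases the index (your two computations $\Delta\leq-2d+4$ for $s\geq2$ and $\Delta=-2\sum_{j}c_{j}+2d-4$ for $s=t=1$ check out, and the only equality case in the latter is the star, which is excluded from $\mathbb{CT}_{n}$ for $n\geq6$ and handled by inspection for $n=5$), and then iterate until the path is reached. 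Your route is notationally heavier but has two advantages: it is stylistically uniform with the paper's own maximality arguments (Lemmas \ref{lem1}--\ref{lem4} use exactly this kind of edge switch together with Eq.\ (\ref{Eq1110})), and it isolates the star as the unique obstruction, so it also yields Corollary \ref{t6} for general trees with no extra work. The paper's induction is shorter and avoids the local $M_{1}$/$M_{2}$ bookkeeping, but it quietly leans on side conditions (e.g.\ that $\sum_{i=1}^{x-1}d_{u_{i}}\geq2$ and that the induction applies to $T_{n-1}$) that your fixed-$n$ argument does not need. One small point of care in your write-up: when you invoke termination of the iteration, say explicitly that the operation is applicable to every non-path tree and that $ZC_{1}^{*}$ strictly decreases on the finite set $\mathbb{CT}_{n}$, so the process must stop at $P_{n}$; as written this is implied but worth making airtight.
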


\begin{proof}
Simple calculations yield $ZC_{1}^{*}(P_{n})=4n-10$. The result will be proved by induction on $n$. For $n=5$, there are only three non-isomorphic trees and hence the conclusion can be easily verified. Suppose that the result holds for all trees of order $\leq n-1$ where $n\geq6$. Let $T_{n}$ be an $n$-vertex tree and $u$ be its pendent vertex adjacent with the vertex $v$. Set $d_{v}=x$ and $N(v)=\{u=u_{0},u_{1},u_{2},...,u_{r-1},u_{r},...,u_{x-1}\}$ where $d_{u_{i}}=1$ for $0\leq i\leq r-1$ and $d_{u_{i}}\geq2$ for $r\leq i\leq x-1$. As $T_{n}$ is different from the star graph $S_{n}$, so $\tau_{v}\geq1$. Let $T_{n-1}$ be the tree obtained from $T_{n}$ by removing the vertex $u$. Then
$$ZC_{1}^{*}(T_{n})=ZC_{1}^{*}(T_{n-1})+\displaystyle\sum_{i=1}^{x-1}d_{u_{i}}\tau_{u_{i}}+d_{v}\tau_{v}+\tau_{u}
 -\displaystyle\sum_{i=1}^{x-1}d_{u_{i}}(\tau_{u_{i}}-1)-(d_{v}-1)\tau_{v}$$
which is equivalent to
\begin{equation}\label{eq00}
ZC_{1}^{*}(T_{n})=ZC_{1}^{*}(T_{n-1})+\displaystyle\sum_{i=1}^{x-1}d_{u_{i}}+d_{v}+\tau_{v}-1
\end{equation}
Bearing in mind the facts $\tau_{v}\geq1$, $d_{v}\geq2$, $\sum_{i=1}^{x-1}d_{u_{i}}\geq2$ and inductive hypothesis, from eq. (\ref{eq00}) we have
$$ZC_{1}^{*}(T_{n})\geq ZC_{1}^{*}(T_{n-1})+4\geq 4(n-1)-10+4\geq ZC_{1}^{*}(P_{n}).$$
Observe that the equality $ZC_{1}^{*}(T_{n})=ZC_{1}^{*}(P_{n})$ holds if and only if $\tau_{v}=1$, $d_{v}=2$, $d_{u_{1}}=2$ and $T_{n-1}\cong P_{n-1}$. This completes the proof.
\end{proof}

It can be easily noted that the proof of Theorem \ref{t00} remains valid if we replace the collection $\mathbb{CT}_{n}$ with the collection of all $n$-vertex general trees different from the star graph $S_{n}$, where $n\geq5$. Moreover, $ZC_{1}^{*}(P_{n})=4n-10<ZC_{1}^{*}(S_{n})=(n-1)(n-2)$ for all $n\geq5$. Hence, one has:

\begin{cor}\label{t6}
For $n\geq5$, the path has minimum $ZC_{1}^{*}$ value among all $n$-vertex trees.
\end{cor}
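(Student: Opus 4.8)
The plan is to deduce Corollary~\ref{t6} directly from Theorem~\ref{t00} together with the remark preceding it, so that essentially nothing new needs to be proved beyond one explicit comparison. The class of all $n$-vertex trees splits into the star $S_{n}$ and everything else, and the remark already tells us that among all $n$-vertex trees \emph{different} from $S_{n}$ the path $P_{n}$ is the minimizer of $ZC_{1}^{*}$ for $n\geq5$. So the only gap left to close is the single comparison between $P_{n}$ and $S_{n}$.

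First I would record the two relevant values. The quantity $ZC_{1}^{*}(P_{n})=4n-10$ is already computed in the proof of Theorem~\ref{t00}. For the star $S_{n}$, I would compute $ZC_{1}^{*}(S_{n})$ by hand: the central vertex has degree $n-1$ but connection number $0$, since it has no vertex at distance $2$, while each of the $n-1$ pendent vertices has degree $1$ and connection number $n-2$, the other $n-2$ pendent vertices lying at distance $2$ from it. Summing $d_{v}\tau_{v}$ over all vertices then gives $ZC_{1}^{*}(S_{n})=(n-1)(n-2)$.

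Next I would verify the strict inequality $ZC_{1}^{*}(P_{n})<ZC_{1}^{*}(S_{n})$ for $n\geq5$. This reduces to showing that $(n-1)(n-2)-(4n-10)>0$, and expanding the left-hand side yields $n^{2}-7n+12=(n-3)(n-4)$, which is strictly positive for every $n\geq5$ because both factors are then positive. Hence $S_{n}$ is never the minimizer, and combining this with the remark that $P_{n}$ minimizes $ZC_{1}^{*}$ over all non-star trees shows that $P_{n}$ attains the minimum over the entire class of $n$-vertex trees.

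I do not expect any genuine obstacle here: the substance of the argument lives entirely in Theorem~\ref{t00} and its stated extension to non-star trees. The one point that requires a little care is the computation of $ZC_{1}^{*}(S_{n})$, and in particular the observation that the high-degree central vertex contributes nothing because its connection number vanishes. This is precisely why the star, despite being extremal for many degree-based indices, fails to be extremal for $ZC_{1}^{*}$ and must be checked separately rather than folded into the inductive argument of Theorem~\ref{t00}.
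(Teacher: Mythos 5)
Your argument is correct and is essentially identical to the paper's own justification: the authors likewise note that the inductive proof of Theorem~\ref{t00} carries over to all $n$-vertex trees other than $S_{n}$, and then dispose of the star by the same comparison $ZC_{1}^{*}(P_{n})=4n-10<(n-1)(n-2)=ZC_{1}^{*}(S_{n})$ for $n\geq5$. Your explicit factorization $(n-3)(n-4)>0$ and the observation that the center of the star has connection number $0$ are just slightly more detailed renderings of the same step.
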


Now, we prove some lemmas which will be used to characterize the trees with maximum $ZC_{1}^{*}$ value among the collection $\mathbb{CT}_{n}$. In the remaining part of this section, we will use the alternative formula of $ZC_{1}^{*}$, given in Eq. (\ref{Eq1110}).

\begin{lem}\label{lem1}
For $n\geq5$, if the tree $T^{*}\in\mathbb{CT}_{n}$ has the maximum $ZC_{1}^{*}$ value then $T^{*}$ contains at most one vertex of degree 2.
\end{lem}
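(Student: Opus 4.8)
The plan is to argue by contradiction using the representation $ZC_{1}^{*}(T)=2M_{2}(T)-M_{1}(T)$ of Eq.~(\ref{Eq1110}), which is available because every tree is triangle- and quadrangle-free. Suppose the maximizer $T^{*}$ has two distinct vertices $u,w$ of degree $2$; I would then produce a tree $T'\in\mathbb{CT}_{n}$ with $ZC_{1}^{*}(T')>ZC_{1}^{*}(T^{*})$, contradicting maximality. Any admissible move must keep $T'$ a tree on the same $n$ vertices and must not create a vertex of degree exceeding $4$; the whole difficulty lives in this second, chemical, requirement.

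The primary move, a \emph{rerouting at $w$}, takes a degree-$2$ vertex $w$ with neighbours $a,b$, deletes the edge $wb$, and inserts the edge $ab$. Thus $w$ becomes a pendent vertex attached to $a$, the vertex $b$ is reattached to $a$, and $d_{a}$ increases by one, so the move is legal exactly when $d_{a}\leq 3$; since $ab$ would close a triangle it is not already present, and the result is again a tree. A direct computation with Eq.~(\ref{Eq1110}) gives $ZC_{1}^{*}(T')-ZC_{1}^{*}(T^{*})=2(d_{a}-1)(d_{b}-2)+2\sigma_{a}$, where $\sigma_{a}=\sum_{z\in N(a)\setminus\{w\}}d_{z}$, and this is strictly positive whenever the grown vertex $a$ has degree at least $2$ and the reattached neighbour $b$ has degree at least $2$. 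In particular, if $u$ and $w$ are adjacent, say on a path $p-u-w-q$, then rerouting at $w$ while growing its degree-$2$ neighbour $u$ (legal because $d_{u}=2\leq 3$) simplifies to $ZC_{1}^{*}(T')-ZC_{1}^{*}(T^{*})=2(d_{p}+d_{q}-2)$, which is positive for $n\geq 5$ (the value $0$ would force $T^{*}\cong P_{4}$).

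For non-adjacent $u,w$ the obstacle surfaces: rerouting stalls precisely when the only neighbour one could grow is already saturated, i.e.\ of degree $4$. To cover this regime I would use a \emph{dissolution of $u$}: delete both edges at $u$, bridge its two neighbours $p,q$ by the edge $pq$, and reattach $u$ as a pendent vertex at $w$. This only raises $d_{w}$ from $2$ to $3$, so it is always chemically legal, which is exactly what the saturated case needs. Here one computes $ZC_{1}^{*}(T')-ZC_{1}^{*}(T^{*})=2(d_{p}-2)(d_{q}-2)+2(d_{a}+d_{b})-4$, with $a,b$ the neighbours of $w$. Since $(d_{p}-2)(d_{q}-2)\geq -2$ for any degree-$2$ vertex of a chemical tree, this is at least $2(d_{a}+d_{b})-8$, hence positive as soon as $w$ has neighbour-degree sum $d_{a}+d_{b}\geq 5$. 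The only remaining possibility is that every degree-$2$ vertex has neighbour-degree sum at most $4$, so that its neighbours lie in $\{1,2\}$, $\{1,3\}$, or $\{2,2\}$; then it always owns a neighbour of degree $\leq 3$ to grow, and the primary rerouting applies and strictly improves, the degenerate sub-configurations where it does not being small trees that carry only a single degree-$2$ vertex and so are excluded here. In every case the maximality of $T^{*}$ is contradicted, forcing at most one vertex of degree $2$.

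The step I expect to be the main obstacle is matching the two moves to the degree-$4$ ceiling. The natural ``grow a neighbour'' rerouting is blocked exactly when that neighbour already has degree $4$, and the content of the argument is that the dissolution move becomes strictly improving precisely in that saturated regime: its favourable range $d_{a}+d_{b}\geq 5$ is complementary to the range $d_{a}+d_{b}\leq 4$ in which rerouting still has room to act. Verifying this complementarity cleanly, and checking that the few equality configurations are genuinely ruled out by $n\geq 5$ together with the presence of two degree-$2$ vertices, is where the care is required.
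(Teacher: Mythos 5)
Your argument is correct, and your two displayed difference formulas check out against $2M_{2}-M_{1}$, but your route is genuinely different from, and considerably more involved than, the paper's. The paper uses one move only: given two degree-$2$ vertices $u$ and $v$ (labelled so that $d_{u_{1}}+d_{u_{2}}\leq d_{v_{1}}+d_{v_{2}}$), it deletes the edge $u_{1}u$ and adds $u_{1}v$. The receiving vertex is the \emph{other degree-$2$ vertex}, so its degree only rises to $3$ and the chemical ceiling never interferes; a two-line computation then gives $ZC_{1}^{*}(T^{*})-ZC_{1}^{*}(T^{(1)})<0$ in both the adjacent and non-adjacent cases, with no further case analysis. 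You instead grow a \emph{neighbour} of a degree-$2$ vertex, which is precisely where the degree-$4$ cap can bite, and you then repair the saturated regime with the dissolution move (which, like the paper's move, exploits that attaching to the other degree-$2$ vertex is always legal). Your complementarity claim does hold: if every degree-$2$ vertex has neighbour-degree sum at most $4$ and no two degree-$2$ vertices are adjacent, the neighbour multiset must be $\{1,3\}$, and there your stated sufficient condition ``$d_{a}\geq2$ and $d_{b}\geq2$'' fails, so you must fall back on the raw formula $\Delta=2(d_{a}-1)(d_{b}-2)+2\sigma_{a}=2\sigma_{a}-4$ and argue $\sigma_{a}\geq3$; the point is that $\sigma_{a}=2$ forces the degree-$3$ neighbour's other two neighbours to be pendent, hence $n=5$ with only one degree-$2$ vertex, contradicting the hypothesis. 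Your parenthetical remark about ``degenerate sub-configurations'' is exactly this verification, but it is the one step a reader would want written out explicitly. In short: both proofs work; the paper's choice of target for the transferred edge eliminates the saturation analysis that occupies most of your argument.
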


\begin{proof}
Suppose to the contrary that $T^{*}$ contains more than one vertex of degree 2. Let $u,v\in V(T^{*})$ such that $d_{u}=d_{v}=2$. Let $N(u)=\{u_{1},u_{2}\}$ and $N(v)=\{v_{1},v_{2}\}$. Suppose that the unique path connecting the vertices $u$ and $v$ contains the vertices $u_{2},v_{2}$. Without loss of generality, we may assume that $d_{u_{1}}+d_{u_{2}}\leq d_{v_{1}}+d_{v_{2}}$. Let $T^{(1)}$ be the tree obtained from $T^{*}$ by removing the edge $u_{1}u$ and adding the edge $u_{1}v$. There are two cases.

\textit{Case 1.} The vertices $u$ and $v$ are adjacent. That is, $u_{2}=v$ and $v_{2}=u$.\\
We calculate the value of the difference $ZC_{1}^{*}(T^{*})-ZC_{1}^{*}(T^{(1)})$. If the vertex $u_{1}$ is pendent then the condition $n\geq5$ guaranties that $d_{v_{1}}\geq2$. Hence, whether the vertex $u_{1}$ is pendent or not, in both cases we have
\[ZC_{1}^{*}(T^{*})-ZC_{1}^{*}(T^{(1)})=2(2-d_{u_{1}}-d_{v_{1}})<0,\]
which is a contradiction the definition of $T^{*}$. \\
\textit{Case 2.} The vertices $u$ and $v$ are not adjacent.\\
Bearing in mind the inequality $d_{u_{1}}+d_{u_{2}}\leq d_{v_{1}}+d_{v_{2}}$, we have
\begin{equation*}
ZC_{1}^{*}(T^{*})-ZC_{1}^{*}(T^{(1)})=2(d_{u_{2}}-d_{u_{1}}-d_{v_{1}}-d_{v_{2}}+1)<0,
\end{equation*}
which contradicts the maximality of $ZC_{1}^{*}(T^{*})$.\\
In both cases, contradiction arises. Hence, $T^{*}$ contains at most one vertex of degree 2.
\end{proof}

\begin{lem}\label{lem-1b}
For $n\geq5$, if the tree $T^{*}\in\mathbb{CT}_{n}$ has the maximum $ZC_{1}^{*}$ value and $T^{*}$ contains a vertex $u$ of degree 2 then one of the neighbors of $u$ is pendent.
\end{lem}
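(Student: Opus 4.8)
The plan is to argue by contradiction. Suppose $T^{*}$ has maximum $ZC_{1}^{*}$ value, contains a degree-$2$ vertex $u$, but \emph{neither} neighbour of $u$ is pendent. Write $N(u)=\{u_{1},u_{2}\}$ with $d_{u_{1}}=a$ and $d_{u_{2}}=b$. Since neither neighbour is pendent, $a,b\geq 2$; and since Lemma \ref{lem1} makes $u$ the unique vertex of degree $2$, in fact $a,b\in\{3,4\}$. Throughout I would exploit that a tree is triangle- and quadrangle-free, so that by Corollary \ref{c555} one works with $ZC_{1}^{*}=2M_{2}-M_{1}$ via Eq.~(\ref{Eq1110}). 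This is the decisive simplification: $M_{1}$ and $M_{2}$ respond only to the degrees of the handful of vertices touched by a local edge-swap, whereas the connection numbers $\tau$ would change globally and be painful to track directly.

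The first move handles $\min\{a,b\}=3$; say $a=3$. Let $A,B$ be the components of $T^{*}-u$ containing $u_{1},u_{2}$, and form $T^{(1)}$ by deleting $uu_{2}$ and inserting $u_{1}u_{2}$. Then $u$ becomes pendent, $d_{u_{1}}$ rises from $3$ to $4$ (so $T^{(1)}$ is still a \emph{chemical} tree), $d_{u_{2}}$ is unchanged, and no other degree moves. A short evaluation of $\Delta(2M_{2}-M_{1})$, using only $a=3$ and $b\geq 2$, yields $ZC_{1}^{*}(T^{(1)})-ZC_{1}^{*}(T^{*})\geq 2(a-1)>0$, contradicting maximality; the symmetric swap settles $b=3$.

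The remaining, genuinely harder, case is $a=b=4$: the degree bound now forbids raising $u_{1}$ or $u_{2}$ to degree $5$, so the pendentisation above is unavailable and a purely local swap around $u$ may actually \emph{decrease} the index. Instead I would raise $d_{u}$ from $2$ to $3$ by relocating a carefully chosen leaf onto $u$. Rooting $T^{*}$ at $u$, pick an internal vertex $q$ of maximum depth; then every child of $q$ is a leaf and $q\neq u$. Letting $p$ be such a leaf-child, form $T^{(2)}$ by deleting $pq$ and inserting $pu$. Only $d_{u}$ (from $2$ to $3$) and $d_{q}$ (from $c$ to $c-1$) change, and the gain is
\[
ZC_{1}^{*}(T^{(2)})-ZC_{1}^{*}(T^{*}) = 2(d_{u_{1}}+d_{u_{2}}) - 2\sum_{r\in N(q)\setminus\{p\}} d_{r}.
\]
The value of choosing $q$ deepest is that $N(q)\setminus\{p\}$ consists of leaves together with the single parent of $q$, so the subtracted sum is at most $(\deg q-2)+4\leq 6$, while $d_{u_{1}}+d_{u_{2}}=8$; hence the gain is at least $4>0$, again a contradiction. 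The boundary possibility $q\in\{u_{1},u_{2}\}$ (which occurs for the minimal nine-vertex configuration) would be checked separately, where the same leaf-relocation still gives a strictly positive gain because the neighbours left behind are all leaves.

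The main obstacle is precisely the chemical-tree maximum-degree bound of $4$: it blocks the natural ``slide $u$ onto a leaf'' transformation in the all-degree-$4$ neighbourhood and forces one to replace a local swap by a non-local one. The two supporting technical points are (i) verifying that every swap keeps the object a chemical tree (the degree of the receiving vertex never exceeds $4$), and (ii) the existence of a deepest internal vertex whose children are all leaves, which is exactly what pins the subtracted degree-sum strictly below $d_{u_{1}}+d_{u_{2}}$ and makes the gain positive.
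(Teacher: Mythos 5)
Your proof is correct, but it follows a genuinely different route from the paper's. You case-split on the degrees of $u_{1},u_{2}$, which Lemma \ref{lem1} pins to $\{3,4\}$: when some $d_{u_{i}}=3$ you contract $u$ out on that side (making $u$ pendent and raising the degree-$3$ neighbour to $4$), and when $d_{u_{1}}=d_{u_{2}}=4$ you import a deepest leaf onto $u$. Both computations check out via Eq.\ (\ref{Eq1110}): the first swap gains $4b+2(c_{1}+c_{2})-8\geq 4$ (where $c_{1},c_{2}$ are the degrees of the other neighbours of the degree-$3$ vertex), and the second gains exactly your displayed $2(d_{u_{1}}+d_{u_{2}})-2\sum_{r\in N(q)\setminus\{p\}}d_{r}\geq 4$; the boundary case $q\in\{u_{1},u_{2}\}$ forces $n=9$ and a direct check gives a gain of $6$, as you assert. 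The paper instead splits on whether $u_{1}$ or $u_{2}$ has a pendent neighbour: in either case it deletes both edges at $u$, adds the edge $u_{1}u_{2}$, and re-inserts $u$ either as a subdivision of a distant pendent edge $wt$ (preserving the degree sequence) or as a new leaf hanging from a pendent neighbour $u_{11}$ of $u_{1}$; the decisive inequalities there are $2d_{u_{1}}+2d_{u_{2}}-d_{u_{1}}d_{u_{2}}\leq 3$ and $d_{u_{1}}+2d_{u_{2}}-d_{u_{1}}d_{u_{2}}\leq 0$ for $d_{u_{1}},d_{u_{2}}\geq 3$. The paper's version is more uniform, needing no separate all-degree-$4$ case and no rooting/depth argument; yours makes explicit where the maximum-degree bound of $4$ actually bites and leans more heavily on Lemma \ref{lem1} to restrict the neighbour degrees. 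Both arguments are complete.
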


\begin{proof}
Suppose to the contrary that both the neighbors of $u$, say $u_{1}$ and $u_{2}$, are non-pendent. Then, Lemma \ref{lem1} suggests that both of the vertices $u_{1},u_{2}$ must have degree at least 3. Observe that all the pendent vertices of $T^{*}$ are star-type.\\

\textit{Case 1.} Neither of the vertices $u_{1},u_{2}$ has a pendent neighbor.\\
Let $w\in V(T^{*})$ be a pendent vertex adjacent with the vertex $t\in V(T^{*})$. Suppose that $T^{(1)}$ is the tree obtained from $T^{*}$ by removing the edges $u_{1}u,u_{2}u,wt$ and adding the edges $u_{1}u_{2},wu,ut$. Observe that both the trees $T^{(1)}$ and $T^{*}$
have same degree sequence. Note that $2d_{u_{1}}+2d_{u_{2}}-d_{u_{1}}d_{u_{2}}\leq3$, which implies that
\[ZC_{1}^{*}(T^{*})-ZC_{1}^{*}(T^{(1)})=2(2d_{u_{1}}+2d_{u_{2}}-d_{u_{1}}d_{u_{2}}-2-d_{t})\leq2(1-d_{t})<0.\]
This contradicts the maximality of $ZC_{1}^{*}(T^{*})$.

\textit{Case 2.} At least one of the vertices $u_{1},u_{2}$ has a pendent neighbor.\\
Without loss of generality, assume that $u_{1}$ is adjacent with a pendent vertex $u_{11}\in V(T^{*})$. Let $T^{(2)}$ be the tree obtained from $T^{*}$ by removing the edges $u_{1}u,u_{2}u$ and adding the edges $u_{1}u_{2},u_{11}u$. Bearing in mind the inequality $d_{u_{1}}+2d_{u_{2}}-d_{u_{1}}d_{u_{2}}\leq0$, one has
\[ZC_{1}^{*}(T^{*})-ZC_{1}^{*}(T^{(2)})=2(d_{u_{1}}+2d_{u_{2}}-d_{u_{1}}d_{u_{2}}-2)<0,\]
which is again a contradiction. This completes the proof.

\end{proof}

\begin{lem}\label{lem2}
For $n\geq7$, if the tree $T^{*}\in\mathbb{CT}_{n}$ has the maximum $ZC_{1}^{*}$ value then $T^{*}$ contains at most one vertex of degree 3.
\end{lem}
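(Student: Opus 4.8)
The plan is to argue by contradiction, mirroring the edge-swapping strategy of Lemmas \ref{lem1} and \ref{lem-1b}, but now with a transformation that \emph{changes} the degree sequence: I will replace a pair of degree-$3$ vertices by one vertex of degree $4$ and one of degree $2$, and show this strictly increases the index. Throughout I work with the formula $ZC_{1}^{*}(G)=2M_{2}(G)-M_{1}(G)$ from Eq. (\ref{Eq1110}), valid because trees are triangle- and quadrangle-free.

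Suppose $T^{*}$ contains two vertices $u,v$ of degree $3$. For $v$, exactly one neighbor lies toward $u$ on the unique $u$–$v$ path, so the other two neighbors hang off in subtrees avoiding $u$; let $v'$ be such a neighbor of largest degree, and symmetrically let $u'$ be a largest-degree neighbor of $u$ off the $u$–$v$ path. Define $T^{(1)}$ by deleting $vv'$ and inserting $uv'$ (so that $d_{u}$ becomes $4$ and $d_{v}$ becomes $2$), and $T^{(2)}$ by deleting $uu'$ and inserting $vu'$. Since a tree has no triangles, $v'$ is not already adjacent to $u$ (nor $u'$ to $v$), so both operations yield genuine $n$-vertex trees of maximum degree $4$, hence members of $\mathbb{CT}_{n}$.

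The core is the local computation of $ZC_{1}^{*}(T^{(i)})-ZC_{1}^{*}(T^{*})$. Writing $S_{u}$ and $S_{v}$ for the sums of the degrees of the neighbors of $u$ and of $v$ in $T^{*}$, bookkeeping over the edges incident to $u$ and $v$ (the only ones whose endpoint degrees change, noting $\Delta M_{1}=2$) gives, when $u$ and $v$ are \emph{non-adjacent},
\[
ZC_{1}^{*}(T^{(1)})-ZC_{1}^{*}(T^{*})=2(S_{u}-S_{v})+4d_{v'}-2,
\]
with the symmetric expression for $T^{(2)}$. I would then assume without loss of generality that $S_{u}\geq S_{v}$; since every degree is at least $1$ we have $d_{v'}\geq 1$, so this difference is at least $2>0$, contradicting the maximality of $ZC_{1}^{*}(T^{*})$. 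Equivalently, the two differences sum to $4d_{u'}+4d_{v'}-4>0$, so at least one transformation strictly raises the index.

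The main obstacle is the case in which $u$ and $v$ are \emph{adjacent}, since then the edge $uv$ has both endpoints changing degree and the constant term shifts: the difference for $T^{(1)}$ becomes $2(S_{u}-S_{v})+4d_{v'}-4$, which under $S_{u}\geq S_{v}$ is only guaranteed to be nonnegative. This is precisely where the hypothesis $n\geq 7$ enters: equality would force $S_{u}=S_{v}$ and $d_{v'}=1$, meaning both off-path neighbors of $u$ and of $v$ are pendent, so that $T^{*}$ consists solely of $u,v$ and their four pendent neighbors, a tree on $6$ vertices. Since $n\geq 7$ rules this out, the difference is strictly positive, again contradicting maximality. Therefore $T^{*}$ has at most one vertex of degree $3$.
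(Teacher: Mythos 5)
Your proof is correct and follows essentially the same route as the paper's: the same edge relocation that turns the two degree-$3$ vertices into one of degree $4$ and one of degree $2$ (moving a largest-degree off-path neighbor from the vertex with the smaller neighbor-degree sum to the other), the same computation via $ZC_{1}^{*}=2M_{2}-M_{1}$, the same split into adjacent and non-adjacent cases, and the same use of $n\geq7$ to exclude the $6$-vertex equality configuration in the adjacent case. Only your bookkeeping notation ($S_{u}$, $S_{v}$) and the optional ``sum of the two differences'' remark in place of the explicit WLOG differ from the paper's presentation.
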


\begin{proof}
Suppose, contrarily, that $T^{*}$ contains at least two vertices of degree 3. Let $u,v\in V(T^{*})$ such that $d_{u}=d_{v}=3$. Let $N(u)=\{u_{1},u_{2},u_{3}\}$ and $N(v)=\{v_{1},v_{2},v_{3}\}$. Suppose that the unique path connecting the vertices $u$ and $v$ contains the vertices $u_{3},v_{3}$. Without loss of generality, we may also assume that $d_{u_{1}}+d_{u_{2}}+d_{u_{3}}\leq d_{v_{1}}+d_{v_{2}}+d_{v_{3}}$ and $d_{u_{1}}\geq d_{u_{2}}$. Let $T^{(1)}$ be the tree obtained from $T^{*}$ by removing the edge $u_{1}u$ and adding the edge $u_{1}v$. There are two cases.

\textit{Case 1.} The vertices $u$ and $v$ are adjacent. That is, $u_{3}=v$ and $v_{3}=u$.\\
If both the vertices $u_{1}$ and $u_{2}$ are pendent then the assumption $n\geq7$ forces that at least one of the vertices $v_{1}$ and $v_{2}$ must be non-pendent and hence
\begin{eqnarray*}
ZC_{1}^{*}(T^{*})-ZC_{1}^{*}(T^{(1)})&=& 2(2+d_{u_{2}}-d_{u_{1}}-d_{v_{1}}-d_{v_{2}})<0,
\end{eqnarray*}
which is a contradiction.\\
If at least one of the vertices $u_{1},u_{2}$ is non-pendent then $d_{u_{1}}\geq2$ (because $d_{u_{1}}\geq d_{u_{2}}$) and hence we have
\begin{eqnarray*}
ZC_{1}^{*}(T^{*})-ZC_{1}^{*}(T^{(1)})&=& 2(2+d_{u_{2}}-d_{u_{1}}-d_{v_{1}}-d_{v_{2}})\\
&\leq&2(2+d_{u_{1}}-d_{u_{2}}-d_{u_{1}}-d_{u_{2}})=4(1-d_{u_{1}})<0,
\end{eqnarray*}
again a contradiction.\\
\textit{Case 2.} The vertices $u$ and $v$ are not adjacent.\\
In this case we have:
\begin{equation}\label{Eq117}
ZC_{1}^{*}(T^{*})-ZC_{1}^{*}(T^{(1)})= 2(1-d_{u_{1}}+d_{u_{2}}+d_{u_{3}}-d_{v_{1}}-d_{v_{2}}-d_{v_{3}}).
\end{equation}
From the inequality $d_{u_{1}}+d_{u_{2}}+d_{u_{3}}\leq d_{v_{1}}+d_{v_{2}}+d_{v_{3}}$ and eq. \ref{Eq117} it follows that
$$ZC_{1}^{*}(T^{*})-ZC_{1}^{*}(T^{(1)})\leq2(1-2d_{u_{1}})<0,$$
which contradicts the maximality of $ZC_{1}^{*}(T^{*})$.\\
In both cases, contradiction arises. Hence, $T^{*}$ contains at most one vertex of degree 3.
\end{proof}

\begin{lem}\label{lem3}
For $n\geq7$,
if the tree $T^{*}\in\mathbb{CT}_{n}$ has the maximum $ZC_{1}^{*}$ value then $T^{*}$ does not contain vertices of degree 2 and 3 simultaneously.
\end{lem}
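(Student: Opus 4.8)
The plan is to argue by contradiction and, assuming that $T^{*}$ contains simultaneously a vertex $u$ of degree $2$ and a vertex $w$ of degree $3$, to exhibit a chemical tree on the same vertex set whose $ZC_{1}^{*}$ value is strictly larger, contradicting the maximality of $T^{*}$. First I would invoke Lemma \ref{lem-1b}: the degree-$2$ vertex $u$ has a pendent neighbor, so write $N(u)=\{p,a\}$ with $p$ pendent. Since $n\geq 7$, the vertex $a$ cannot itself be pendent (otherwise $p$–$u$–$a$ would be the whole tree), and Lemma \ref{lem1} forbids a second vertex of degree $2$; hence $d_{a}\in\{3,4\}$, and the only bound I shall actually use is $d_{a}\leq 4$. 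The single transformation I would use is to delete the edge $up$ and insert the edge $wp$, obtaining a tree $T^{(1)}$. This raises $d_{w}$ from $3$ to $4$ and lowers $d_{u}$ from $2$ to $1$, leaves all other degrees unchanged, and keeps the maximum degree at most $4$, so $T^{(1)}\in\mathbb{CT}_{n}$ is a legitimate competitor for the maximum.

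To estimate the difference I would use the representation $ZC_{1}^{*}=2M_{2}-M_{1}$ from Eq.~(\ref{Eq1110}), which is valid because trees are triangle- and quadrangle-free. Passing from $T^{*}$ to $T^{(1)}$ alters only the two degrees $d_{u}$ and $d_{w}$, so $M_{1}$ changes by exactly $(1^{2}-2^{2})+(4^{2}-3^{2})=4$, irrespective of the local structure; for $M_{2}$ only the edges incident to $u$ and to $w$, together with the deleted edge $up$ and the inserted edge $wp$, have to be tracked. Two cases arise according to whether $u$ and $w$ are adjacent, i.e. whether $w=a$. If $w\neq a$, writing $N(w)=\{w_{1},w_{2},w_{3}\}$ I expect
\[
ZC_{1}^{*}(T^{*})-ZC_{1}^{*}(T^{(1)})=2\left(d_{a}-d_{w_{1}}-d_{w_{2}}-d_{w_{3}}\right);
\]
connectivity of $T^{*}$ forces the neighbor of $w$ lying on the $w$–$u$ path to be non-pendent, hence (by Lemma \ref{lem1}) of degree at least $3$, so $d_{w_{1}}+d_{w_{2}}+d_{w_{3}}\geq 5>4\geq d_{a}$ and the difference is negative. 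If instead $w=a$, denoting by $w_{1},w_{2}$ the two neighbors of $w$ other than $u$, the analogous computation gives
\[
ZC_{1}^{*}(T^{*})-ZC_{1}^{*}(T^{(1)})=2\left(2-d_{w_{1}}-d_{w_{2}}\right),
\]
and again $n\geq 7$ forces at least one of $w_{1},w_{2}$ to be non-pendent, whence $d_{w_{1}}+d_{w_{2}}\geq 4$ and the difference is at most $-4<0$. In either case $ZC_{1}^{*}(T^{(1)})>ZC_{1}^{*}(T^{*})$, the desired contradiction.

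The step I expect to be the main obstacle is the $M_{2}$ bookkeeping in the adjacent case $w=a$, where the edge $uw$ and the degree changes of both endpoints interact, so that the naive formula of the non-adjacent case must be corrected; I would isolate this by listing precisely the edges whose endpoint-degree product changes. The second delicate point is the purely structural claim that $w$ has a neighbor of degree at least $3$: this is where the hypotheses $n\geq 7$, Lemma \ref{lem1} and Lemma \ref{lem-1b} are genuinely used, since they rule out the small and degenerate configurations in which every neighbor of $w$ is pendent or of degree $2$. Once these two inputs are in place, both cases close uniformly.
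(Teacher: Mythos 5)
Your proof is correct and follows essentially the same route as the paper: the same neighbor-shift transformation (detaching a neighbor of the degree-$2$ vertex and reattaching it to the degree-$3$ vertex), the same adjacent/non-adjacent case split, and the same computation via $ZC_{1}^{*}=2M_{2}-M_{1}$. The only cosmetic difference is that you first invoke Lemma \ref{lem-1b} to take the moved neighbor to be pendent (the paper moves the off-path neighbor without assuming it is pendent) and you use the weaker bound $d\geq 3$ where the paper pins the on-path neighbor's degree to exactly $4$; both suffice.
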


\begin{proof}
Suppose to the contrary that $u,v\in V(T^{*})$ such that $d_{u}=2$ and $d_{v}=3$. Let $N(u)=\{u_{1},u_{2}\}$ and $N(v)=\{v_{1},v_{2},v_{3}\}$. Suppose that the unique path connecting the vertices $u$ and $v$ contains the vertices $u_{2},v_{3}$. Let $T^{(1)}$ be the tree obtained from $T^{*}$ by removing the edge $u_{1}u$ and adding the edge $u_{1}v$. There are two cases.\\

\textit{Case 1.} The vertices $u$ and $v$ are adjacent. That is, $u_{2}=v$ and $v_{3}=u$.\\
Note that the assumption $n\geq7$ implies that at least one of the vertices $u_{1}$, $v_{1}$, $v_{2}$ must be non-pendent and hence
\begin{eqnarray*}
ZC_{1}^{*}(T^{*})-ZC_{1}^{*}(T^{(1)})&=& 2(4-2d_{u_{1}}-d_{v_{1}}-d_{v_{2}})<0,
\end{eqnarray*}
which is a contradiction.\\

\textit{Case 2.} The vertices $u$ and $v$ are not adjacent.\\
By the virtue of Lemma \ref{lem1} and Lemma \ref{lem2}, we have $d_{v_{3}}=4$ and hence
\begin{equation*}
ZC_{1}^{*}(T^{*})-ZC_{1}^{*}(T^{(1)})= 2(2+d_{u_{2}}-2d_{u_{1}}-d_{v_{1}}-d_{v_{2}}-d_{v_{3}})<0,
\end{equation*}
which contradicts the maximality of $ZC_{1}^{*}(T^{*})$. This completes the proof.
\end{proof}

\begin{lem}\label{lem4}
For $n\geq7$, let the tree $T^{*}\in\mathbb{CT}_{n}$ has the maximum $ZC_{1}^{*}$ value. If $T^{*}$ contains a vertex $u$ of degree 3 then $u$ has two pendent neighbors.
\end{lem}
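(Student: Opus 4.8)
The plan is to combine the structural restrictions already proved with a single degree-preserving edge swap, using the identity $ZC_1^*(G)=2M_2(G)-M_1(G)$ (Eq.~\eqref{Eq1110}), which is available here because trees are triangle- and quadrangle-free. First I would record the structure forced on $T^*$: since $T^*$ contains a vertex $u$ of degree $3$, Lemma~\ref{lem2} makes $u$ the \emph{unique} vertex of degree $3$, and Lemma~\ref{lem3} forbids any vertex of degree $2$. As $T^*$ is a chemical tree, every vertex other than $u$ therefore has degree $1$ or $4$; in particular each of the three neighbors of $u$ has degree $1$ or $4$.

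Next I would argue by contradiction. If $u$ does \emph{not} have two pendent neighbors, then at most one neighbor is pendent, so at least two neighbors, say $a$ and $b$, have degree $4$. Let $U$ be the component of $T^*-ub$ that contains $u$; then $a\in U$ and $b\notin U$. Rooting $U$ at $u$ and descending into the subtree rooted at the degree-$4$ vertex $a$, I would choose a leaf $w$ of that subtree together with its parent $t$. Because this subtree avoids $u$ and has more than one vertex, $w\neq u,a$ and $t\neq u$; since $t$ is not a leaf while $T^*$ has no degree-$2$ vertex and $u$ is its only degree-$3$ vertex, necessarily $d_t=4$. Finally $t,w\in U$ force $t\neq b$ and $w\neq b$, and $w$'s only neighbor $t\neq u$ guarantees $uw\notin E(T^*)$.

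Then I would form $T^{(1)}$ from $T^*$ by deleting the edges $ub$ and $tw$ and inserting the edges $tb$ and $uw$. The operation fixes every degree, so $T^{(1)}$ has the same degree sequence and max degree $4$; removing $ub,tw$ leaves the three pieces $U\setminus\{w\}$, $\{w\}$ and the $b$-subtree, which the new edges $uw$ and $tb$ (with $t\in U$, $b\notin U$) reconnect into a single tree on $n-1$ edges, so $T^{(1)}\in\mathbb{CT}_n$. Since $M_1$ is unchanged and only the four swapped edges alter $M_2$,
\[
ZC_1^*(T^*)-ZC_1^*(T^{(1)})=2\bigl(d_ud_b+d_td_w-d_td_b-d_ud_w\bigr)=6(3-d_t)<0,
\]
using $d_u=3$, $d_b=4$, $d_w=1$ and $d_t=4$. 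This contradicts the maximality of $ZC_1^*(T^*)$, so $u$ must have two pendent neighbors.

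The main obstacle I anticipate is not the arithmetic but the existence and correct placement of the swap vertices $w,t$: I must ensure the leaf $w$ can be taken on the $u$-side of the deleted edge $ub$, so that the reinserted edges genuinely rebuild a tree rather than a disconnected graph or one of maximum degree $5$, and that its parent $t$ is \emph{forced} to have degree $4$. Both points rely on having a second degree-$4$ neighbor $a$ of $u$ sitting inside $U$, together with the degree restrictions from Lemmas~\ref{lem1}, \ref{lem2} and \ref{lem3}; once these are secured, the computed gain of $6$ is immediate. (Note the hypothesis is vacuous precisely when there are fewer than two degree-$4$ vertices, e.g.\ for the smallest case $n=7$, where the conclusion then holds automatically.)
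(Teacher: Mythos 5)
Your proof is correct, and it reaches the conclusion by the same general strategy as the paper (contradiction via a local edge operation, evaluated through the identity $ZC_{1}^{*}=2M_{2}-M_{1}$ after Lemmas \ref{lem1}--\ref{lem3} force all vertices other than $u$ to have degree $1$ or $4$), but the operation itself is genuinely different. The paper first asserts the existence of a degree-$4$ vertex $v$ with three pendent neighbors and then \emph{relocates} one pendent neighbor $w$ of $v$ to $u$ (deleting $vw$, adding $uw$); this transfers the degree-$3$ role from $u$ to $v$, preserves the degree sequence only as a multiset, and requires a two-case analysis according to whether $v\in N(u)$, yielding $2(d_{v_{1}}-d_{u_{3}}-6)<0$ and $-2d_{u_{3}}-4<0$ respectively. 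You instead perform a degree-preserving double swap (delete $ub$ and $tw$, add $tb$ and $uw$), which needs only a leaf $w$ with parent $t$ on the $u$-side of the edge $ub$ --- an existence claim you justify carefully, and which is easier to verify than the paper's claim about a degree-$4$ vertex with three pendent leaves. Your version keeps every individual degree fixed, so the $M_{1}$ term drops out trivially, the sign computation reduces to the single product $(d_{u}-d_{t})(d_{b}-d_{w})=-3$, and no case split is needed; the price is the slightly longer verification that the reassembled graph is again a tree in $\mathbb{CT}_{n}$, which you carry out correctly. Both arguments are sound; yours is arguably tighter because it avoids the unproved existence assertion in the paper's proof.
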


\begin{proof}
Contrarily, suppose that $u$ has at least two non-pendent neighbors. Let $N(u)=\{u_{1},u_{2},u_{3}\}$. Without loss of generality, we may assume that $u_{1},u_{2}$ are non-pendent. Then, due to Lemmas \ref{lem1} - \ref{lem3}, both the vertices $u_{1},u_{2}$ have degree 4 and there must exist a vertex $v\in V(T^{*})$ of degree 4 such that three neighbors of $v$ are pendent. Let $w$ be a pendent neighbor of $v$ and suppose that $T^{(1)}$ is the tree obtained from $T^{*}$ by removing the edge $vw$ and adding the edge $uw$.

\textit{Case 1.} The vertex $v$ doest not belong to the set $N(u)$.\\
Let $v_{1}$ be the unique non-pendent neighbor of $v$. Then, simple calculations yield:
\[ZC_{1}^{*}(T^{*})-ZC_{1}^{*}(T^{(1)})= 2(d_{v_{1}}-d_{u_{3}}-6)<0,\]
which is a contradiction.

\textit{Case 2.} The vertex $v$ belongs to the set $N(u)$.\\
Without loss of generality, we may assume that $v=u_{1}$. Then, we again have a contradiction as follows:
\[ZC_{1}^{*}(T^{*})-ZC_{1}^{*}(T^{(1)})= -2d_{u_{3}}-4<0.\]
This completes the proof.
\end{proof}

For $n\geq9$ and $n\equiv0$ (mod 3), let $\mathbb{CT}_{n}^{(0)}$ be the collection of those $n$-vertex chemical trees in which one vertex has degree 2 whose one neighbor is pendent, and every other vertex has degree 1 or 4. For $n\geq7$ and $n\equiv1$ (mod 3), let $\mathbb{CT}_{n}^{(1)}$ be the collection of those $n$-vertex chemical trees in which one vertex has degree 3 whose two neighbors are pendent, and every other vertex has degree 4 or 1. For $n\geq8$ and $n\equiv2$ (mod 3), let $\mathbb{CT}_{n}^{(2)}$ be the collection of those $n$-vertex chemical trees which consists of only vertices of degree 1 and 4. Let us take $\mathbb{CT}_{n}^{*}=\mathbb{CT}_{n}^{(0)}\cup\mathbb{CT}_{n}^{(1)}\cup\mathbb{CT}_{n}^{(2)}$.

\begin{thm}\label{thm-1111}
For $n\geq7$, the tree $T^{*}\in\mathbb{CT}_{n}$ has maximum $ZC_{1}^{*}$ value if and only if $T^{*}\in\mathbb{CT}_{n}^{*}$.
\end{thm}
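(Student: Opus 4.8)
The plan is to combine the structural restrictions already extracted in Lemmas \ref{lem1}--\ref{lem4} with a degree-counting (handshake) argument, and then to verify that $ZC_1^*$ is in fact \emph{constant} on each candidate family; the statement then splits cleanly into its two implications. Throughout I use that $\mathbb{CT}_n$ is finite, so a maximizer $T^*$ exists, and that for a fixed $n\geq 7$ exactly one of the three families $\mathbb{CT}_n^{(0)},\mathbb{CT}_n^{(1)},\mathbb{CT}_n^{(2)}$ is defined (according to the residue of $n$), so that $\mathbb{CT}_n^*$ coincides with that single family.

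For the forward implication (a maximizer $T^*$ must lie in $\mathbb{CT}_n^*$) I would first record what the lemmas say. Writing $n_i$ for the number of vertices of degree $i$ in $T^*$ (so $i\in\{1,2,3,4\}$ since $T^*$ is a chemical tree), Lemma \ref{lem1} gives $n_2\leq 1$, Lemma \ref{lem2} gives $n_3\leq 1$, and Lemma \ref{lem3} forbids $n_2=n_3=1$; hence $(n_2,n_3)\in\{(0,0),(1,0),(0,1)\}$. The key computation is the pair of identities $\sum_i n_i=n$ and $\sum_i i\,n_i=2(n-1)$, whose difference is $n_2+2n_3+3n_4=n-2$. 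Substituting each admissible pair $(n_2,n_3)$ forces $3n_4=n-2$, $n-3$, or $n-4$ respectively, so each case is compatible with exactly one residue of $n$ modulo $3$: $(0,0)$ needs $n\equiv 2$ (mod 3), $(1,0)$ needs $n\equiv 0$ (mod 3), and $(0,1)$ needs $n\equiv 1$ (mod 3). Since $n$ has a single residue, $T^*$ is forced into exactly one of the three degree patterns, and Lemmas \ref{lem-1b} and \ref{lem4} pin down the neighborhood of the distinguished degree-$2$ (resp. degree-$3$) vertex: one (resp. two) pendent neighbors, with the remaining internal neighbor necessarily of degree $4$, since it can be neither the already-used special vertex nor a second low-degree vertex. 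This is exactly the description of $\mathbb{CT}_n^{(2)}$, $\mathbb{CT}_n^{(0)}$, and $\mathbb{CT}_n^{(1)}$, so $T^*\in\mathbb{CT}_n^*$.

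For the reverse implication I would show that $ZC_1^*$ takes a single value on each family $\mathbb{CT}_n^{(j)}$; then, once a maximizer is known to lie there (by the forward implication), every member of the family is a maximizer as well. Since a tree is triangle- and quadrangle-free, Corollary \ref{c555} lets me write $ZC_1^*=2M_2-M_1$. Within a fixed family the degree sequence is determined by the analysis above, so $M_1$ is constant; the substantive point is that $M_2=\sum_{uv\in E}d_ud_v$ is also constant. For this I would count edges according to the unordered pair of their end-degrees: no edge joins two pendent vertices, every pendent vertex contributes exactly one pendent-to-internal edge, and in each family every internal neighbor of every vertex is forced to have degree $4$ (there is at most one non-degree-$4$ internal vertex, and its internal neighbor is a degree-$4$ vertex). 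Hence the number of edges of each end-degree type is determined by the degree sequence together with the prescribed neighborhood of the distinguished vertex, so $M_2$, and therefore $ZC_1^*$, is the same throughout $\mathbb{CT}_n^{(j)}=\mathbb{CT}_n^*$.

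The main obstacle is precisely this constancy of $M_2$ across a family: unlike $M_1$, the index $M_2$ is not a degree-sequence invariant in general, so the argument must genuinely exploit that—apart from the single distinguished vertex—degree $4$ is the only available internal degree, which freezes the multiset of end-degree pairs over all edges. The degree-counting step matching each residue class of $n$ to a unique degree pattern is the other place to be careful, but it is essentially bookkeeping once Lemmas \ref{lem1}--\ref{lem4} are in hand.
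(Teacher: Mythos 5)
Your proof is correct and follows essentially the same route as the paper, whose own proof simply asserts that the theorem ``follows from Lemmas \ref{lem1}--\ref{lem4} and the definition of the collection $\mathbb{CT}_{n}^{*}$.'' You supply two details the paper leaves implicit --- the residue-class bookkeeping via $n_{2}+2n_{3}+3n_{4}=n-2$ matching each admissible degree pattern to a unique residue of $n$ modulo $3$, and the constancy of $ZC_{1}^{*}$ on each family (needed for the ``if'' direction) --- the latter of which the paper only carries out afterwards, in the proof of Corollary \ref{cor-1111}.
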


\begin{proof}
The result follows from Lemmas \ref{lem1} - \ref{lem4} and definition of the collection $\mathbb{CT}_{n}^{*}$.
\end{proof}

It is interesting to see that the $n$-vertex chemical trees having maximum $M_{2}$ value \cite{vuki14} and maximum $ZC_{1}^{*}$ value among the collection $\mathbb{CT}_{n}$ are same for $n\geq7$ (it is not true for general trees). Denote by $x_{a,b}$ the number of edges in  the graph $G$ connecting the vertices of degrees $a$ and $b$. Let $n_{a}$ be the number of vertices of degree $a$ in the graph $G$. The following system of equations holds for all trees of the collection $\mathbb{CT}_{n}$.
\begin{equation}\label{Eq555}
\sum_{i=1}^{4}n_{i}=n
\end{equation}
\begin{equation}\label{Eq556}
\sum_{i=1}^{4}i\times n_{i}=2(n-1)
\end{equation}
\begin{equation}\label{Eq557}
\sum_{ \substack{ 1\leq i\leq 4, \\
         i\neq j}}x_{j,i}+2x_{j,j}=j\times n_{j} \text{ \ } ; \text{ \ \ \ $j=1,2,3,4$.}
\end{equation}
Eliminating $n_{1}$ from eq. (\ref{Eq555}) and eq. (\ref{Eq556}):
\begin{equation}\label{Eq558}
\sum_{i=2}^{4}(i-1)\times n_{i}=n-2
\end{equation}

\begin{cor}\label{cor-1111}
For $n\geq7$, let $T$ be any member of $\mathbb{CT}_{n}$. Then
\[ZC_{1}^{*}(T)\leq
\begin{cases}
10(n-4)  & \text{if $n\equiv0$ (mod 3) or $n\equiv1$ (mod 3),} \\
2(5n-19) & \text{otherwise.}
\end{cases}\]
The equality sign in the first inequality holds if and only if $n\equiv0$ (mod 3) and $T\in\mathbb{CT}_{n}^{(0)}$, or $n\equiv1$ (mod 3) and $T\in\mathbb{CT}_{n}^{(1)}$. The equality sign in the second inequality holds if and only if $n\equiv2$ (mod 3) and $T\in\mathbb{CT}_{n}^{(2)}$.
\end{cor}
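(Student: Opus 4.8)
The plan is to evaluate $ZC_{1}^{*}$ explicitly on the extremal family $\mathbb{CT}_{n}^{*}$ identified in Theorem \ref{thm-1111}, using the identity $ZC_{1}^{*}(T)=2M_{2}(T)-M_{1}(T)$ of Corollary \ref{c555}, which applies since every tree is triangle- and quadrangle-free. Because Theorem \ref{thm-1111} already shows that the maximum is attained precisely when $T\in\mathbb{CT}_{n}^{*}$, the corollary reduces to computing the common value of $ZC_{1}^{*}$ on each of the three subclasses $\mathbb{CT}_{n}^{(0)},\mathbb{CT}_{n}^{(1)},\mathbb{CT}_{n}^{(2)}$ and matching these to the residue of $n$ modulo $3$.

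First I would determine the degree counts. Fixing the residue of $n$ and substituting the prescribed degrees into equations (\ref{Eq555}) and (\ref{Eq556}) (equivalently (\ref{Eq558})) pins down $n_{1},n_{2},n_{3},n_{4}$ as explicit linear functions of $n$: for $n\equiv2$, $n_{4}=(n-2)/3$; for $n\equiv1$, $n_{3}=1$ and $n_{4}=(n-4)/3$; for $n\equiv0$, $n_{2}=1$ and $n_{4}=(n-3)/3$, with $n_{1}$ determined in each case. From $M_{1}=\sum_{i}i^{2}n_{i}$ this already gives $M_{1}=6n-10$ in the case $n\equiv2$ and $M_{1}=6n-12$ in the cases $n\equiv0,1$.

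The crux is computing $M_{2}=\sum_{1\le i\le j\le4}ij\,x_{i,j}$, and the main point I must establish is that the edge-type multiplicities $x_{i,j}$ are themselves forced by $n$, so that $M_{2}$, and hence $ZC_{1}^{*}$, does not vary within a subclass. This is where the structural lemmas do the work: in each subclass the only vertex degrees occurring are $1$, $4$, and, in $\mathbb{CT}_{n}^{(0)}$ and $\mathbb{CT}_{n}^{(1)}$, a single special vertex of degree $2$ or $3$. The edges incident to that special vertex are completely determined, since its pendent neighbors give fixed $(1,2)$ or $(1,3)$ edges, while its unique non-pendent neighbor has degree $4$ by Lemmas \ref{lem1}--\ref{lem4}, giving a fixed $(2,4)$ or $(3,4)$ edge. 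Once these pinned contributions are removed, only two unknowns survive, $x_{1,4}$ and $x_{4,4}$, and they satisfy two independent linear relations: the total edge count $\sum x_{i,j}=n-1$ and equation (\ref{Eq557}) with $j=4$. Solving this $2\times2$ system determines $x_{1,4}$ and $x_{4,4}$ uniquely, whence $M_{2}=8n-24$ for $n\equiv2$ and $M_{2}=8n-26$ for $n\equiv0,1$.

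Finally, substituting into $ZC_{1}^{*}=2M_{2}-M_{1}$ yields $10(n-4)$ when $n\equiv0$ or $1\pmod 3$ and $2(5n-19)$ when $n\equiv2\pmod 3$; since these values are constant on each subclass and Theorem \ref{thm-1111} characterizes the equality, the stated bounds and their equality conditions follow. I expect the only genuine subtlety to be the uniqueness of the edge-type counts: one must verify that the structural lemmas leave no freedom beyond $x_{1,4}$ and $x_{4,4}$, i.e. that no edges of type $(1,1)$, $(2,2)$, $(3,3)$, $(2,3)$ and so on can occur, so that the two linear equations genuinely close the system and $M_{2}$ is an invariant of the subclass rather than of the individual tree.
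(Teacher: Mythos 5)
Your proposal is correct and follows essentially the same route as the paper: invoke Theorem \ref{thm-1111} to reduce to the extremal family, solve equations (\ref{Eq555})--(\ref{Eq558}) for the degree counts $n_{i}$, observe that the edges at the exceptional degree-$2$ or degree-$3$ vertex are pinned so that only $x_{1,4}$ and $x_{4,4}$ remain to be determined from the linear system (\ref{Eq557}), and then evaluate $ZC_{1}^{*}=2M_{2}-M_{1}$; your intermediate values $M_{1}=6n-10$ or $6n-12$ and $M_{2}=8n-24$ or $8n-26$ check out and give the stated bounds.
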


\begin{proof}
Let $T^{*}\in\mathbb{CT}_{n}^{*}$ and $T\not\in\mathbb{CT}_{n}^{*}$. Then, by virtue of Theorem \ref{thm-1111}, we have $ZC_{1}^{*}(T)< ZC_{1}^{*}(T^{*})$. Hence, to obtain the desired result, it is enough to calculate $ZC_{1}^{*}$ value of $T^{*}$.
If $n\equiv0$ (mod 3) then $n=3k$ where $k\geq3$. From eq. (\ref{Eq558}), it follows that $n_{2}+2n_{3}\equiv1$ (mod 3) which implies that $n_{2}=1,n_{3}=0$. Hence, from eq. (\ref{Eq555}) and eq. (\ref{Eq556}) we have $n_{1}=2k,n_{4}=k-1$. Also, note that $x_{1,2}=x_{2,4}=1$. Now, from system (\ref{Eq557}) we have $x_{1,4}=2k-1,x_{4,4}=k-2$ and hence $ZC_{1}^{*}(T^{*})=10(n-4)$. In a similar way,
we have:
\[ZC_{1}^{*}(T^{*})=
\begin{cases}
10(n-4)  & \text{if $n\equiv1$ (mod 3),} \\
2(5n-19) & \text{if $n\equiv2$ (mod 3).}
\end{cases}\]
From the definitions of the collections $\mathbb{CT}_{n}^{(0)},\mathbb{CT}_{n}^{(1)},\mathbb{CT}_{n}^{(2)},\mathbb{CT}_{n}^{*}$ and from Theorem \ref{thm-1111}, the desired result follows.

\end{proof}

For $n=5,6,$ the trees having maximum $ZC_{1}^{*}$ value among all the members of $\mathbb{CT}_{n}$ can be easily identified from Figure \ref{f1}, using Table \ref{table111}.

\section{Concluding Remarks}

We have fully characterized the extremal chemical trees with fixed number of vertices for the topological index $ZC_{1}^{*}$, which is occurred in an approximate formula for total $\pi$-electron energy, communicated in 1972.

The first Zagreb index $M_{1}$ can also be rewritten as
\begin{equation}\label{Eq1115}
M_{1}(G)=\sum_{uv\in E(G)}(d_{u}+d_{v}).
\end{equation}
So, if we replace vertex degree by vertex connection number in eq. (\ref{Eq1115}) we get the topological index $ZC_{1}^{*}$. Hence, it is natural to consider the following connection-number versions of the Zagreb indices $\sum_{v\in V(G)}d_{v}^{2}$ and $\sum_{uv\in E(G)}d_{u}d_{v}$:
$$ZC_{1}(G)=\sum_{v\in V(G)}\tau_{v}^{2} \ \ \ \text{and} \ \ \ ZC_{2}(G)=\sum_{uv\in E(G)}\tau_{u}\tau_{v} \ ,$$
respectively. We call $ZC_{1}$ as the first Zagreb connection index and $ZC_{2}$ as the second Zagreb connection index. Moreover, the following bond incident connection-number (BIC) indices may be considered as a generalization of the Zagreb connection indices:
\begin{equation}\label{z}
BIC(G)=\displaystyle\sum_{0\leq a\leq b\leq n-2}y_{a,b}(G).\phi_{a,b}
\end{equation}
where $y_{a,b}(G)$ is the number of edges in $G$ connecting the vertices with connection numbers $a$ and $b$, and $\phi_{a,b}$ is a non-negative real valued (symmetric) function which depends on $a$ and $b$.

\end{document}